\documentclass[10pt]{elsarticle}
\usepackage[cp1251]{inputenc}
\usepackage[english]{babel}
\usepackage{amsmath}
\usepackage{amssymb}
\usepackage{amsfonts}
\usepackage{amsthm}
\usepackage{mathtools}
\usepackage{dsfont}
\usepackage{rotating}
\usepackage{graphicx}
\usepackage{floatflt,epsfig}
\usepackage{lineno,hyperref}
\usepackage{enumerate}
\usepackage{colortbl}
\usepackage{array,tabularx,tabulary,booktabs}
\usepackage{longtable}
\usepackage{multirow}
\usepackage{wrapfig}
\usepackage{subcaption}
\usepackage{pdflscape}
\usepackage[table]{xcolor}
\usepackage[left=1in,right=1in,top=1in,bottom=1in]{geometry}

\newcolumntype{^}{>{\currentrowstyle}}

\journal{Designs, Codes and Cryptography}
\setcounter{page}{1}

\newtheorem{theorem}{Theorem}
\newtheorem{corollary}{Corollary}
\newtheorem{proposition}{Proposition}

\newtheorem{problem}{Problem}

\bibliographystyle{elsarticle-num}

\begin{document}
\renewcommand{\abstractname}{Abstract}
\renewcommand{\refname}{References}
\renewcommand{\tablename}{Table}
\renewcommand{\arraystretch}{0.9}
\thispagestyle{empty}
\sloppy

\begin{frontmatter}
\title{Non-canonical maximum cliques without a design structure in the block graphs of 2-designs}

\author[01]{Sergey Goryainov}
\ead{sergey.goryainov3@gmail.com}

\author[02,03,04]{Elena V. Konstantinova}
\ead{e\_konsta@ctgu.edu.cn}

\address[01] {School of Mathematical Sciences, Hebei International Joint Research Center for Mathematics and Interdisciplinary Science, Hebei Key Laboratory of Computational Mathematics and Applications\\Hebei Normal University, Shijiazhuang  050024, Hebei province, P.R. China}
\address[02] {Three Gorges Mathematical Research Center, China Three Gorges University, \\ 8 University Avenue, Yichang 443002, Hubei Province, P.R. China}
\address[03]{Sobolev Institute of Mathematics, Ak. Koptyug av. 4, Novosibirsk, 630090, Russia}
\address[04]{Novosibirsk State University, Pirogova str. 2, Novosibirsk, 630090, Russia}


\begin{abstract}
In this note we answer positively a question of Chris Godsil and Karen Meagher on the existence of a 2-design whose block graph has a non-canonical maximum clique without a design structure.
\end{abstract}

\begin{keyword}
block design; canonical cliques; non-canonical cliques; subdesign 
\vspace{\baselineskip}
\MSC[2020] 05E30 \sep 05B05 
\end{keyword}
\end{frontmatter}

\section{Introduction}\label{sec:Intro}
In \cite[Chapter 5]{GM15}, Chris Godsil and Karen Meagher proposed the study of Erd\H{o}s-Ko-Rado (EKR) properties of strongly regular graphs. In particular, they proposed the study of EKR properties of the block graphs of 2-$(n,m,1)$ designs. The notion of `intersecting' is natural: two blocks (two vertices of the block graph) are called \emph{intersecting} if they intersect as sets. The upper bound $\frac{n-1}{m-1}$ for the size of a maximum intersecting set of blocks (equivalently, a maximum clique in the block graph) is given by the Delsarte bound. Moreover, this bound is tight for the block graph of a 2-$(n,m,1)$ design as the set of all blocks containing a fixed point gives a clique of size $\frac{n-1}{m-1}$; a clique of this size and form is called \emph{canonical}. It follows from \cite[Corollary 5.3.5]{GM15} that the block graph of a 2-$(n,m,1)$ design with $n > m^3-2m^2+2m$ has only canonical maximum cliques. If $n \le m^3-2m^2+2m$, this characterisation may fail (non-canonical maximum cliques may exist). Further, it follows from \cite[Exercise 5.7]{GM15} that the vertices of each non-canonical clique (when such a clique exists) in the block graph of a 2-$(n,m,1)$ design with $n = m^3-2m^2+2m$ necessarily form a 2-$(m^2-m+1,m,1)$ subdesign (which is a projective plane of order $m-1$). However, the authors of \cite{GM15} said that it is not clear if this a result of a wider phenomenon (see \cite[p. 310]{GM15}). They then posed the following problem.

\begin{problem}[{\cite[Problem 16.3.2]{GM15}}]
When the block graph of a design has maximum cliques that are not canonical, are the non-canonical cliques isomorphic to smaller designs?    
\end{problem}

In this note we answer this question and show that it is not necessarily true that the non-canonical cliques are isomorphic to smaller designs.

\begin{theorem}\label{thm:main}
There exists a $2$-$(66,6,1)$ design (having exactly $143$ blocks) such that the following statements hold.\\
{\rm (1)} The block graph of the design has exactly $80$ maximum cliques: $66$ canonical and $14$ non-canonical ones.\\
{\rm (2)} Each of the $14$ non-canonical cliques does not have a design structure.
\end{theorem}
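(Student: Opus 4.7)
The plan is to prove Theorem~\ref{thm:main} by exhibiting an explicit $2$-$(66,6,1)$ design and verifying the structure of its block graph computationally.

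First, I would present the design explicitly, either by listing its $143$ blocks or by describing it via a known construction (for instance, a suitable difference family or a coset structure of a permutation group acting on $66$ points). The parameters are verified directly from the standard counting formulas: $b = v(v-1)/(k(k-1)) = 143$ and $r = (v-1)/(k-1) = 13$. Consequently the Delsarte bound on the block graph gives maximum clique size $13$, and each of the $66$ points determines a canonical clique of this size, accounting for the $66$ canonical cliques in part~(1).

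Second, I would enumerate all maximum cliques in the block graph by computer search. The graph has $143$ vertices and is strongly regular, and a standard clique-enumeration package (for example Cliquer, GAP with GRAPE, or Magma) can list all cliques of size $13$. The expected output is exactly $80$ maximum cliques; removing the $66$ canonical ones leaves the $14$ non-canonical cliques claimed in part~(1). To make the enumeration trustworthy, I would also use the automorphism group of the design to partition the non-canonical cliques into orbits, so that only one representative per orbit needs to be inspected by hand.

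Third, for part~(2), I would examine each of the $14$ non-canonical cliques and verify that its $13$ blocks do not form the block set of a $2$-design on their common support. A basic necessary condition for a $2$-$(v',6,1)$ subdesign with $13$ blocks is $v'(v'-1)/30=13$, i.e.\ $v'(v'-1)=390$, which has no integer solution, so this strong form of ``design structure'' is automatically excluded. To rule out broader notions (such as a $1$-design or any regular tactical configuration on the support), I would compute, for each non-canonical clique, the multiset of replication numbers of the points in its support and observe that the replications are not constant, so no regular structure is possible. The main obstacle is the enumeration step itself, which is infeasible by hand: the proof must rely on a computer verification, so the real burden is to present the design and the fourteen non-canonical cliques in sufficient detail that the computation is reproducible and independently checkable.
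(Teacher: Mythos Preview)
Your plan is essentially the paper's own proof: exhibit a concrete $2$-$(66,6,1)$ design, enumerate the maximum cliques in its block graph by computer, and rule out a $2$-design structure on each non-canonical clique by a divisibility argument. Two points of comparison are worth noting.

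First, the paper does more of the work by hand than you propose. It writes down the $14$ non-canonical cliques explicitly (one orbit of length $13$ and one fixed clique $C_2$) and gives conceptual proofs that the representatives really are cliques: the orbit representative restricts on a $13$-point subset to a projective plane of order $3$, while $C_2$ is a single $\mathbb{Z}_{13}$-orbit of blocks whose pairwise intersections are established via a square/non-square argument in $\mathbb{Z}_{13}^*$. The computer is invoked only to certify that no further non-canonical cliques exist. Your uniform divisibility observation that $v'(v'-1)=390$ has no integer solution is a clean alternative to the paper's case-by-case check of the support sizes $39$ and $26$.

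Second, your proposed extra step of ruling out $1$-designs by checking non-constant replication would in fact fail: the paper records that in $C_2$ every one of the $26$ support points lies in exactly three of the $13$ blocks, so $C_2$ \emph{is} a $1$-$(26,6,3)$ design. This does not harm the theorem, which only asserts the absence of a $2$-design structure, but you should drop that claim from the write-up.
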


To the best of our knowledge, the corresponding block graph with 143 vertices is the smallest (in the sense of number of vertices) known with the property of having non-canonical cliques without a design structure. We have checked with use of Magma~\cite{BCP97} that the smaller block graphs of the designs from~\cite{BBT03},~\cite{C99},~\cite[p. 34, Table 1.34]{CD07},~\cite{CDRS19},~\cite{D14}, \cite{K02a},~\cite{K02b},~\cite{K04},~\cite{K06},~\cite{MR85},  and~\cite{MN21} have only canonical maximum cliques. Note that the data on most of these designs were taken from~\cite{B} and~\cite{K}. Let us mention that the design from Theorem \ref{thm:main} is not unique. The database given in \cite{K} contains at least three $2$-$(66,6,1)$ designs. Each of the block graphs of these designs has exactly 14 non-canonical cliques (all without a design structure); one of these designs is isomorphic to the design from Theorem \ref{thm:main}. 

This note is organised as follows. In Section \ref{sec:prelim}, we give necessary definitions and preliminary results. In Section \ref{sec:main}, we give a proof of Theorem \ref{thm:main} by analysing a design with a known symmetric description. In Section \ref{sec:discussion}, we discuss consequences of Theorem \ref{thm:main} and further research directions. In Appendix, we give a description and discuss two more designs with the same parameters, whose block graphs have non-canonical cliques without a design structure.

\section{Preliminaries}\label{sec:prelim}
In this section, we give necessary definitions and preliminary results.

\subsection{Block designs and their graphs}
For the background and more details, we refer to \cite[Section 5.3]{GM15}.

A 2-$(n,m,1)$ \emph{design} is a collection of $m$-sets of an $n$-set with the property that every pair from the $n$-set is in exactly one $m$-set. The elements of the $n$-set are called the \emph{points} of the design. The $m$-sets are called the \emph{blocks} of the design. It is well known that the number of blocks in a 2-$(n,m,1)$ design is $\frac{n(n-1)}{m(m-1)}$ and each point occurs in exactly $\frac{n-1}{m-1}$ blocks.  

The \emph{block graph of a} 2-$(n,m,1)$ \emph{design} is the graph with the blocks of the design as the vertices in which two blocks are adjacent if and only if they intersect.

A $k$-regular graph with $v$ vertices is called \emph{strongly regular} with parameters $(v,k,\lambda,\mu)$ if any two adjacent vertices in this graph have exactly $\lambda$ common neighbours and any two distinct nonadjacent vertices in this graph have exactly $\mu$ common neighbours. The block graph of a 2-$(n,m,1)$ design is known to be strongly regular with parameters 
$$
\left(
\frac{n(n-1)}{m(m-1)},
\frac{m(n-m)}{m-1},
(m-1)^2 + \frac{n-1}{m-1}-2,
m^2
\right)
$$
and to have smallest eigenvalue $-m$. 

Let $\theta$ be the smallest eigenvalue of a $k$-regular strongly regular graph $\Gamma$. Philippe Delsarte proved \cite{D73} that the clique number of $\Gamma$ is at most $1-\frac{k}{\theta}$. Applied to the block graph of a 2-$(n,m,1)$ design that is not symmetric, the Delsarte bound gives the value $\frac{n-1}{m-1}$. This bound is tight for the block graph of a 2-$(n,m,1)$ design (that is not symmetric) as the set of all blocks containing a fixed point gives a clique of size $\frac{n-1}{m-1}$; a clique of this size and form is called \emph{canonical}. 

\subsection{A $2$-$(66,6,1)$ design}\label{sec:design}
In this section, we discuss a 2-$(66,6,1)$ design constructed in \cite{D80}. To the best of our knowledge, this construction does not extend to an infinite family. The construction of this design can also be found in \cite[p. 76]{CD07}. In \cite{A23}, we got a more detailed description of the construction. Let us provide this detailed description here. The point set is $\mathcal{P} = \mathbb{Z}_{13} \times (\mathbb{Z}_3 \cup \{a,b\}) \cup \{\infty\}$. To define the block set, consider the following eleven basic blocks:
\begin{align*}
B_1&=\{2_0,5_0,4_1,9_1,0_a,6_a\},\\
B_2&=\{1_0,2_0,6_0,12_2,5_b,8_b\},\\
B_3&=\{6_1,2_1,12_2,1_2,0_a,5_a\},\\                
B_4&=\{3_1,6_1,5_1,10_0,2_b,11_b\},\\
B_5&=\{5_2,6_2,10_0,3_0,0_a,2_a\},\\               
B_6&=\{9_2,5_2,2_2,4_1,6_b,7_b\},\\
B_7&=\{7_0,9_0,10_1,1_2,3_a,4_b\},\\              
B_8&=\{2_a,6_a,5_a,4_b,12_b,10_b\},\\
B_9&=\{8_1,1_1,4_2,3_0,9_a,12_b\},\\            
B_{10}&=\{11_2,3_2,12_0,9_1,1_a,10_b\},\\
B_{11}&=\{\infty,0_0,0_1,0_2,0_a,0_b\}.   
\end{align*}
The 143 blocks of the block set $\mathcal{B}$ of the design are then obtained by developing modulo 13 the $\mathbb{Z}_{13}$-components of all points in the basic blocks. More precisely, for any $e \in \mathbb{Z}_{13}$ and basic block $B_i$, denote by $B_i^e$ the set of points obtained from the points of $B_i$ by adding $e$ modulo 13 to the $\mathbb{Z}_{13}$-component of each non-infinity point of $B_i$. Then, for the block set $\mathcal{B}$ of the design, we have:
$$
\mathcal{B} = \bigcup_{i = 1}^{11}\bigcup\limits_{e \in \mathbb{Z}_{13}} B_i^e.
$$
We find by Magma  \cite{BCP97} that the automorphism group of this design is a nonabelian group of order 39 and coincides with the automorphism group of the corresponding block graph. It can be generated by
\begin{align*}
\pi_1 = &
(0_0~10_1~1_2)
(1_0~0_1~10_2)
(2_0~3_1~6_2)
(3_0~6_1~2_2)
(4_0~9_1~11_2)
(5_0~12_1~7_2)\\
&
(6_0~2_1~3_2)
(7_0~5_1~12_2)
(8_0~8_1~8_2)
(9_0~11_1~4_2)
(10_0~1_1~0_2)
(11_0~4_1~9_2)
(12_0~7_1~5_2)\\
& 
(0_a~10_a~1_a)
(2_a~3_a~6_a)
(4_a~9_a~11_a)
(5_a~12_a~7_a)
(0_b~10_b~1_b)
(2_b~3_b~6_b)
(4_b~9_b~11_b)
(5_b~12_b~7_b)
\end{align*}
and
\begin{align*}
\pi_2 = &
(0_0~1_0~2_0~3_0~4_0~5_0~6_0~7_0~8_0~9_0~10_0~11_0~12_0)\\
&
(0_1~1_1~2_1~3_1~4_1~5_1~6_1~7_1~8_1~9_1~10_1~11_1~12_1)\\
&
(0_2~1_2~2_2~3_2~4_2~5_2~6_2~7_2~8_2~9_2~10_2~11_2~12_2)\\
&
(0_a~1_a~2_a~3_a~4_a~5_a~6_a~7_a~8_a~9_a~10_a~11_a~12_a)\\
&
(0_b~1_b~2_b~3_b~4_b~5_b~6_b~7_b~8_b~9_b~10_b~11_b~12_b).
\end{align*}
The orbits of this group on the set of points are 
$$\{i_j~:~ i \in \mathbb{Z}_{13}, j \in \mathbb{Z}_3\},$$ 
$$\{i_a~:~ i \in \mathbb{Z}_{13}\},$$ 
$$\{i_b~:~ i \in \mathbb{Z}_{13}\},$$ 
$$\{ \infty \}$$
and the orbits on the set of blocks are
$$\bigcup\limits_{e \in \mathbb{Z}_{13}} B_1^e \cup 
\bigcup\limits_{e \in \mathbb{Z}_{13}} B_3^e \cup
\bigcup\limits_{e \in \mathbb{Z}_{13}} B_5^e,$$
$$
\bigcup\limits_{e \in \mathbb{Z}_{13}} B_2^e \cup 
\bigcup\limits_{e \in \mathbb{Z}_{13}} B_4^e \cup 
\bigcup\limits_{e \in \mathbb{Z}_{13}} B_6^e, 
$$
$$
\bigcup\limits_{e \in \mathbb{Z}_{13}} B_7^e \cup 
\bigcup\limits_{e \in \mathbb{Z}_{13}} B_9^e \cup 
\bigcup\limits_{e \in \mathbb{Z}_{13}} B_{10}^e, 
$$
$$
\bigcup\limits_{e \in \mathbb{Z}_{13}} B_8^e, 
$$
$$
\bigcup\limits_{e \in \mathbb{Z}_{13}} B_{11}^e. 
$$

It is clear that the symbols $a$ and $b$ can be naturally used to characterise the orbits on the points and the blocks. This is the reason why we consider them and the symbols $\{0,1,2\}$ separately.

\section{Proof of Theorem \ref{thm:main}}\label{sec:main}
To prove Theorem \ref{thm:main}, we give explicit examples of non-canonical maximum cliques in the block graph of the 2-(66,6,1) design $(\mathcal{P},\mathcal{B})$ described in Section \ref{sec:design} and show that each of these cliques does not form a design.

Consider the set of blocks 
$$
C_1 = \{B_1^{11},B_2^1,B_3^1,B_4^{10},B_5^{10},B_6^{11},B_7^6,B_9^{12},B_{10}^4,B_{11}^0,B_{11}^2,B_{11}^3,B_{11}^7\},
$$
where
\begin{align*}
B_1^{11} &= \{0_0,3_0,2_1,7_1,11_a,4_a\},\\
B_2^1 &= \{2_0,3_0,7_0,0_2,6_b,9_b\},\\
B_3^1 &= \{7_1,3_1,0_2,2_2,1_a,6_a\},\\
B_4^{10} &= \{0_1,3_1,2_1,7_0,12_b,8_b\},\\
B_5^{10} &= \{2_2,3_2,7_0,0_0,10_a,12_a\},\\
B_6^{11} &= \{7_2,3_2,0_2,2_1,4_b,5_b\},\\
B_7^6 &= \{0_0,2_0,3_1,7_2,9_a,10_b\},\\
B_9^{12} &= \{7_1,0_1,3_2,2_0,8_a,11_b\},\\
B_{10}^4 &= \{2_2,7_2,3_0,0_1,5_a,1_b\},\\
B_{11}^0 &= \{\infty,0_0,0_1,0_2,0_a,0_b\},\\
B_{11}^2 &= \{\infty,2_0,2_1,2_2,2_a,2_b\},\\
B_{11}^3 &= \{\infty,3_0,3_1,3_2,3_a,3_b\},\\
B_{11}^7 &= \{\infty,7_0,7_1,7_2,7_a,7_b\}.
\end{align*}

\begin{proposition}\label{prop:C1}
The set $C_1$ is a non-canonical maximum clique in the block graph of the design $(\mathcal{P},\mathcal{B})$.     
\end{proposition}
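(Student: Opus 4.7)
The plan is to establish three facts in turn: (i) $|C_1|=13$; (ii) $C_1$ is a clique; and (iii) no point of $\mathcal{P}$ belongs to every block in $C_1$. Since $(n-1)/(m-1)=65/5=13$, the Delsarte bound recalled in Section \ref{sec:prelim} then forces $C_1$ to be of maximum size, and (iii) makes it non-canonical. Fact (i) is immediate from the explicit listing.

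For (ii), I would split $C_1$ into the four blocks $B_{11}^e$ with $e\in\{0,2,3,7\}$ and the nine remaining blocks. All four $B_{11}^e$ contain $\infty$, so they pairwise meet. For a cross pair $(B_{11}^e,B)$, a quick inspection of the nine listed non-$B_{11}$ blocks shows that each of them contains exactly one point $e_j$ with $j\in\{0,1,2\}$ for every $e\in\{0,2,3,7\}$; this point lies in $B_{11}^e=\{\infty,e_0,e_1,e_2,e_a,e_b\}$, giving the desired intersection.

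The only substantive step is the $\binom{9}{2}=36$ pairs of non-$B_{11}$ blocks. I would first observe that the $a$- and $b$-points appearing in these nine blocks have $\mathbb{Z}_{13}$-indices in $\{1,4,5,6,8,9,10,11,12\}$ and are pairwise disjoint across the nine blocks, so any such intersection must occur at a \emph{special} point $e_j$ with $e\in\{0,2,3,7\}$ and $j\in\{0,1,2\}$. Encode each non-$B_{11}$ block $B$ by the $4$-tuple $\tau(B)\in\{0,1,2\}^4$ whose coordinates record, for $e=0,2,3,7$ respectively, the unique $j$ with $e_j\in B$. Then the intersection condition becomes purely combinatorial: $\tau(B)$ and $\tau(B')$ must agree in at least one coordinate, i.e., their Hamming distance must be strictly less than $4$. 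A single table of the nine $4$-tuples settles all $36$ pairs at once.

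Finally, for (iii), any point common to every block of $C_1$ would in particular lie in $B_{11}^0\cap B_{11}^2=\{\infty\}$, and $\infty\notin B_1^{11}$; hence no such point exists. I do not anticipate any real obstacle: the whole argument is a finite verification, and the $4$-tuple reformulation in the previous paragraph is precisely what turns the only bulky part into a clean case check.
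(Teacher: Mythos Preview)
Your argument is correct, and the three-step outline (size, clique, non-canonical) is exactly the right structure; each claimed auxiliary fact checks out against the explicit block listing. However, the route you take for step (ii) differs from the paper's. The paper observes that the restriction of all thirteen blocks to the thirteen ``special'' points $\{\infty\}\cup\{e_j : e\in\{0,2,3,7\},\, j\in\{0,1,2\}\}$ is a $2$-$(13,4,1)$ design, i.e.\ a projective plane of order~$3$, from which pairwise intersection is immediate (any two lines of a projective plane meet). Your $4$-tuple encoding is essentially the same data---each non-$B_{11}$ block, restricted to the special points, is a line of that plane not through~$\infty$, and your tuples are affine coordinates for those lines---but you stop short of naming the structure and instead propose a direct $36$-pair Hamming check. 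The paper's formulation buys a one-line conceptual proof and also explains \emph{why} the configuration works; your approach is more elementary and self-contained, and it has the merit of making the non-canonicality argument in~(iii) fully explicit, which the paper leaves implicit.
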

\begin{proof}
One can check that any two blocks in $C_1$ intersect. Indeed, these 13 blocks (restricted to the points $\{\infty\} \cup \{n_x \mid n \in \{0, 2, 3, 7\}, x \in \{0, 1, 2\}$)
form a $2$-$(13, 4, 1)$ design, a projective plane of order 3. So, we have a 2-design,
and for each block a unique 2-point extension.    
\end{proof}

For any $e \in \mathbb{Z}_{13}$, put 
$$C_1^e = \{B^e \mid B \in C_1\}.$$

\begin{corollary}\label{cor:13cliques}
For any $e \in \mathbb{Z}_{13}$, the set $C_1^e$ forms a non-canonical maximum clique in the block graph of the design $(\mathcal{P},\mathcal{B})$.  
\end{corollary}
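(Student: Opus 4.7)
The plan is to deduce Corollary \ref{cor:13cliques} directly from Proposition \ref{prop:C1} by transport of structure along a design automorphism. The permutation $\pi_2^e$ (the $e$-th power of the generator exhibited in Section \ref{sec:design}) sends each non-infinity point $p_x$ to $(p+e)_x$ and fixes $\infty$; because the block set $\mathcal{B}$ was defined by developing the basic blocks modulo $13$, the map $\pi_2^e$ is an automorphism of $(\mathcal{P},\mathcal{B})$, and hence of the associated block graph. By the very notation introduced before the corollary, the image of a block $B$ under $\pi_2^e$ is precisely $B^e$, so the image of $C_1$ under $\pi_2^e$ is exactly $C_1^e$.

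Two standard observations then finish the argument. First, graph automorphisms send cliques to cliques and preserve their sizes; combined with Proposition \ref{prop:C1}, this shows that $C_1^e$ is a maximum clique of the block graph. Second, automorphisms preserve canonicity: for any point $p$, an automorphism $\sigma$ bijectively maps the set of all blocks through $p$ onto the set of all blocks through $\sigma(p)$, which is again a canonical clique. Consequently, if $C_1^e$ were canonical, then applying $\pi_2^{-e}$ would make $C_1$ canonical, contradicting Proposition \ref{prop:C1}. Thus $C_1^e$ is a non-canonical maximum clique.

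There is no real obstacle here: the corollary is essentially a repackaging of the cyclic symmetry of the construction. The only fact used beyond Proposition \ref{prop:C1} is that the $\mathbb{Z}_{13}$-shift on points lifts to a design automorphism, which is immediate from the development-modulo-$13$ definition of $\mathcal{B}$ and is already recorded in Section \ref{sec:design}.
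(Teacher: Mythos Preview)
Your proof is correct and takes essentially the same approach as the paper: the paper's one-line argument (``it follows from Proposition~\ref{prop:C1} and the structure of the block set'') is precisely the transport-of-structure argument you spell out, with the $\mathbb{Z}_{13}$-shift $\pi_2^e$ being the relevant design automorphism. You have simply made explicit what the paper leaves implicit.
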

\begin{proof}
If follows from Proposition \ref{prop:C1} and the structure of the block set of the design $(\mathcal{P},\mathcal{B})$.    
\end{proof}
\noindent Thus, Corollary \ref{cor:13cliques} gives 13 non-canonical maximum cliques.

Consider the set of blocks
$$C_2=\bigcup\limits_{e \in \mathbb{Z}_{13}} \{B_8^e\},$$
obtained by developing modulo 13 the $\mathbb{Z}_{13}$-components of all points in the basic block $B_8$. 

\begin{proposition}\label{prop:C2}
The set $C_2$ is a non-canonical maximum clique in the block graph of the design $(\mathcal{P},\mathcal{B})$.     
\end{proposition}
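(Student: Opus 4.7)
The plan is to follow the three-step structure of Proposition~\ref{prop:C1}: confirm that $|C_2|=13$ matches the Delsarte bound $\frac{65}{5}$, so that ``clique'' and ``maximum clique'' coincide here; verify that every two blocks of $C_2$ intersect; and separately rule out canonicity and a 2-design structure. The first step is immediate, since $C_2$ is by construction the $\langle\pi_2\rangle$-orbit of $B_8$, of length $13$.

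For pairwise intersection I would reduce to a difference-set computation in $\mathbb{Z}_{13}$: two translates $B_8^e, B_8^f$ with $e \ne f$ meet iff the shift $d=f-e$ is representable as a difference of two elements of the $a$-index set $\{2,5,6\}$ or of the $b$-index set $\{4,10,12\}$. A direct enumeration gives the two difference sets $\{\pm 1,\pm 3,\pm 4\}$ and $\{\pm 2,\pm 6,\pm 8\}$, whose union is all of $\mathbb{Z}_{13}\setminus\{0\}$, so any two distinct blocks in $C_2$ intersect. For non-canonicity, the support of $C_2$ is the $26$-point set $\{i_a,i_b : i\in\mathbb{Z}_{13}\}$, and each such point lies in exactly three blocks of $C_2$---for $i_a$, the shifts with $e\in\{i-2,i-5,i-6\}$---so no point is common to all 13 blocks.

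The main obstacle is showing the absence of a design structure, which I would handle by a parameter-counting obstruction. Suppose some subset $S$ of the support makes the restrictions $\{B\cap S : B\in C_2\}$ a $2$-$(v,k,\lambda)$ design. Since every support point has replication number $r=3$ with respect to $C_2$, double-counting incidences forces $13k=3|S|$, and with $k\le 6$ this leaves only the cases $(|S|,k)=(13,3)$ and $(26,6)$. The pair-count identity $\lambda\binom{|S|}{2}=13\binom{k}{2}$ then yields $\lambda=1/2$ and $\lambda=3/5$ respectively, neither an integer, so no admissible $S$ exists. A transparent witness confirms the failure at the level of single pairs: within the $a$-orbit, $\{0_a,1_a\}$ lies in exactly one block (the shift $e=8$, since $1\in\{\pm 1,\pm 3,\pm 4\}$) whereas $\{0_a,2_a\}$ lies in none (since $2\notin\{\pm 1,\pm 3,\pm 4\}$), so $\lambda$ cannot be constant.
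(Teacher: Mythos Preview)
Your argument is correct and, for the core clique claim, follows the same route as the paper: reduce the intersection of $B_8^{e}$ and $B_8^{f}$ to the question of whether $f-e$ lies in the difference set of the $a$-indices $\{2,5,6\}$ or of the $b$-indices $\{4,10,12\}$, and then observe that these two difference sets together exhaust $\mathbb{Z}_{13}\setminus\{0\}$. The paper adds one structural observation you omit: since the $b$-index set equals $2R$ for $R=\{2,5,6\}$, the two difference sets are precisely the nonzero squares $S_{13}$ and the non-squares $N_{13}$ in $\mathbb{Z}_{13}$, which explains why their union is all of $\mathbb{Z}_{13}^*$ without any enumeration.

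Your treatment is in fact more complete than the paper's formal proof of the proposition, which establishes only the pairwise intersection; the replication-number remark (each support point in exactly three blocks) and the exclusion of a design structure appear in the paper only after the proof, and the latter is handled there by the single divisibility check that a $2$-$(26,6,1)$ design has non-integral block count. Your more general restriction argument, allowing an arbitrary subset $S$ of the support, is sound but goes beyond what either the proposition or the paper's subsequent discussion requires.
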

\begin{proof}
Let us show that, for any distinct $e_1,e_2 \in \mathbb{Z}_{13}$, the blocks $B_8^{e_1}$ and $B_8^{e_2}$ intersect. Recall that $B_8 = \{2_a,6_a,5_a,4_b,12_b,10_b\}$. Let $R = \{2,6,5\}$, then $2R = \{4,12,10\}$. It is clear that the block $B_8$ can be obtained by adding the subscript $a$ to the elements from $R$, adding the subscript $b$ to the elements from $2R$ and joining these two 3-sets. Note that $R-R = 3\cdot\{0\} + 1\cdot S_{13}$ and $2R - 2R = 3\cdot\{0\} + 1\cdot N_{13}$, where $S_{13}$ and $N_{13}$ are the sets of squares and non-squares in $\mathbb{Z}_{13}^*$, respectively. In other words, every square (resp. non-square) from $\mathbb{Z}_{13}^*$ can be uniquely expressed as the difference of two distinct elements from $R$ (resp. $2R$). Let $i+R$ and $j+R$ be two translations of $R$ for some distinct $i,j \in \mathbb{Z}_{13}$. Note that, for any $d \in \mathbb{Z}_{13}$, we have $d \in (i+R) \cap (j+R)$ if and only if there exist $r_1,r_2 \in R$ such that $d = i + r_1 = j + r_2$, that is, if and only if there exist $r_1,r_2 \in R$ such that $i-j = r_2 - r_1$. This implies that $|(i+R) \cap (j+R)| = 1$ if $i-j$ is a square in $\mathbb{Z}_{13}^*$ and $|(i+R) \cap (j+R)| = 0$, otherwise. Similarly, we have $|(i+2R) \cap (j+2R)| = 1$ if $i-j$ is a non-square in $\mathbb{Z}_{13}^*$ and $|(i+2R) \cap (j+2R)| = 0$, otherwise. Thus, for any distinct $e_1,e_2 \in \mathbb{Z}_{13}$, the blocks $B_8^{e_1}$ and $B_8^{e_2}$ intersect.  
\end{proof}
Note that every point from the 26-element union of the blocks of $C_2$ belongs to exactly three blocks from $C_2$.

We then verify by Magma \cite{BCP97} that the block graph of $(\mathcal{P},\mathcal{B})$ has only these 14 non-canonical maximum cliques. The 66 canonical cliques split into four orbits of length 39, 13, 13 and 1, and the 14 non-canonical cliques split into two orbits of length 13 and 1.

Finally, a 2-(39,6,1) design (the total number of points in the union of the blocks from $C_1$ is 39) and a 2-(26,6,1) design (the total number of points in the union of the blocks from $C_2$ is 26) do not exist since $\frac{n-1}{m-1} \notin \mathbb{N}$ (for the former) and $\frac{n(n-1)}{m(m-1)} \notin \mathbb{N}$ (for the latter), which implies that neither of the 14 non-canonical maximum cliques has a design structure. $\square$ 

\section{Further discussion}\label{sec:discussion}
In this section, we discuss consequences of Theorem \ref{thm:main} and further research directions.

In \cite[p. 72, Table 3.3]{CD07}, the necessary and sufficient conditions for the existence of a 2-$(n,m,1)$ design with $m \le 9$ are given. For $m \ge 10$, much less is known. However, as was discussed in Section \ref{sec:Intro}, non-canonical maximum cliques in the block graph of a 2-$(n,m,1)$ design may exist only if $n \le m^3-2m^2+2m$. This means that for a fixed value of $m$, there exist only finitely many 2-$(n,m,1)$ designs whose block graphs may have non-canonical maximum cliques. We thus formulate the following open problem.
\begin{problem}\label{prob:InfFamily}
Does there exist an infinite family of 2-designs whose block graphs have non-canonical maximum cliques without a design structure?    
\end{problem}
Note that in such a family from Problem \ref{prob:InfFamily}, the parameter $m$ cannot be a fixed number. It follows from \cite[p. 103, Section 5.11]{CD07} that only four infinite families of 2-$(n,m,1)$ designs with a growing $m$ are known: 
\begin{itemize}
    \item the point-line incidence structure of an affine geometry, 
    \item the point-line incidence structure of a projective geometry, 
    \item unitals,
    \item Denniston designs.
\end{itemize}

Let $\operatorname{AG}(d,q)$ be an affine space and $D_1$ be the corresponding 2-$(n,m,1)$ design, where $n = q^d$ and $m = q$. If $d \ge 3$, the inequality $n > m^3 - 2m^2 + 2m$ is satisfied and the block graph of $D_1$ has only canonical maximum cliques. If $d = 2$, the inequality $n > m^3 - 2m^2 + 2m$ is not satisfied, but the design $D_1$ is given by the lines in an affine plane (the block graph is a complete multipartite graph, a trivial strongly regular graph, and all maximum cliques are easy to describe).

Let $\operatorname{PG}(d,q)$ be a projective space and $D_2$ be the corresponding 2-$(n,m,1)$ design, where $n = \frac{q^{d+1}-1}{q-1}$ and $m = q+1$. If $d \ge 4$, the inequality $n > m^3 - 2m^2 + 2m$ is satisfied and the block graph of $D_2$ has only canonical maximum cliques. If $d = 3$, we have $n = m^3 - 2m^2 + 2m$, and the block graph of $D_2$ is known as the Grassmann graph $J_q(4,2)$. The subgraph induced by the first neighbourhood of a vertex is the $q$-clique-extension of a $(q+1)\times(q+1)$-grid, which means that each vertex is contained in $q+1$ canonical maximum cliques and $q+1$ non-canonical maximum cliques (see \cite[Section 3.5.1]{BV22}). By \cite[Exercise 5.7]{GM15}, the non-canonical cliques in the block graph of $D_2$ correspond to the sets of lines in planes of $\operatorname{PG}(3,q)$. If $d=2$ then $D_2$ is a symmetric design and the corresponding block graph is a clique. 

Let $D_3$ be a unital, that is, a $2$-$(t^3+1,t+1,1)$ design, where $t$ is a positive integer. Then we have $n < m^3 - 2m^2 + 2m$ and the block graph of $D_3$ may have non-canonical maximum cliques. One of the most important infinite families of unitals is the family of Hermitian unitals. It is known \cite[Section 3.1.6]{BV22} that the block graph of a Hermitian unital has only canonical maximum cliques. Moreover, it was proved in \cite[Theorem 6.4]{D15} that the block graph of any unital has only canonical maximum cliques.

Let $D_4$ be a Denniston 2-$(2^{r+s}+2^r-2^s,2^r,1)$ design with $2 \le r < s$ (see \cite[p. 103]{CD07}, \cite[Section 2]{GR03} and \cite{D69}). It can be easily shown by direct substitution that the inequality $n \le m^3-2m^2+2m$ holds for $D_4$ (that is, the block graph of $D_4$ may have non-canonical cliques) if and only if $s < 2r$. We have computationally checked that for $(r,s) \in \{(2,3),(3,4),(3,5),(4,5),(4,6),(4,7)\}$ the block graph of a Denniston design has only canonical maximum cliques.
We thus formulate the following open problem.

\begin{problem}\label{prob:Denniston}
Does there exist a Denniston design whose block graph has a non-canonical maximum clique?  
\end{problem}

\section*{Acknowledgements} \label{Ack}
The authors thank R. Julian R. Abel for clarification the construction of the 2-(66,6,1) design from \cite[p. 76]{CD07}.
The authors also thank Anton Betten for providing the database of designs constructed in \cite{BBT03}. The authors are grateful to Sam Adriaensen for attracting their attention to the publication \cite{D15}. Sergey Goryainov thanks Three Gorges Mathematical Research Center for organising his visit in October 2023 during which these results were obtained. Finally, the authors thank the anonymous referees whose valuable comments significantly improved the paper.


\appendix
\section{Two more designs with the same parameters}
In this appendix we provide two more 2-(66,6,1) designs whose block graphs have non-canonical cliques without a subdesign structure. Since we do not know any symmetric description of these two designs, we assume that their sets of points are $\{1,\ldots, 66\}$ and just list the blocks.

The 143 blocks of the first additional design are:
\begin{longtable}{cccc}
   \{1,2,15,28,41,54\}, & \{1,3,16,29,42,55\}, & \{1,4,17,30,43,56\}, & \{1,5,18,31,44,57\}, \\
    \{1,6,19,32,45,58\}, &  \{1,7,20,33,46,59\}, & \{1,8,21,34,47,60\}, & \{1,9,22,35,48,61\},\\
    \{1,10,23,36,49,62\}, & \{1,11,24,37,50,63\}, & \{1,12,25,38,51,64\}, & \{1,13,26,39,52,65\},\\
    \{1,14,27,40,53,66\}, & \{2,11,14,18,23,25\}, & \{2,3,12,19,24,26\}, & \{3,4,13,20,25,27\}, \\
    \{4,5,14,15,21,26\}, & \{2,5,6,16,22,27\}, & \{3,6,7,15,17,23\}, & \{4,7,8,16,18,24\}, \\
    \{5,8,9,17,19,25\}, & \{6,9,10,18,20,26\}, & \{7,10,11,19,21,27\}, & \{8,11,12,15,20,22\}, \\
    \{9,12,13,16,21,23\}, & \{10,13,14,17,22,24\}, & \{2,13,29,30,35,46\}, & \{3,14,30,31,36,47\}, \\
    \{2,4,31,32,37,48\}, & \{3,5,32,33,38,49\}, & \{4,6,33,34,39,50\}, & \{5,7,34,35,40,51\}, \\
    \{6,8,28,35,36,52\}, & \{7,9,29,36,37,53\}, & \{8,10,30,37,38,41\}, & \{9,11,31,38,39,42\}, \\
    \{10,12,32,39,40,43\}, & \{11,13,28,33,40,44\}, & \{12,14,28,29,34,45\}, & \{2,10,34,59,63,65\}, \\
    \{3,11,35,60,64,66\}, & \{4,12,36,54,61,65\}, & \{5,13,37,55,62,66\}, & \{6,14,38,54,56,63\}, \\
    \{2,7,39,55,57,64\}, & \{3,8,40,56,58,65\}, & \{4,9,28,57,59,66\}, &  \{5,10,29,54,58,60\},\\
    \{6,11,30,55,59,61\}, & \{7,12,31,56,60,62\}, & \{8,13,32,57,61,63\}, & \{9,14,33,58,62,64\}, \\
    \{2,9,44,47,49,56\}, & \{3,10,45,48,50,57\}, & \{4,11,46,49,51,58\}, & \{5,12,47,50,52,59\}, \\
    \{6,13,48,51,53,60\}, & \{7,14,41,49,52,61\}, & \{2,8,42,50,53,62\}, & \{3,9,41,43,51,63\}, \\
    \{4,10,42,44,52,64\}, & \{5,11,43,45,53,65\}, & \{6,12,41,44,46,66\}, & \{7,13,42,45,47,54\}, \\
    \{8,14,43,46,48,55\}, & \{2,17,33,36,45,51\}, & \{3,18,34,37,46,52\}, & \{4,19,35,38,47,53\}, \\
    \{5,20,36,39,41,48\}, & \{6,21,37,40,42,49\}, & \{7,22,28,38,43,50\}, & \{8,23,29,39,44,51\}, \\
    \{9,24,30,40,45,52\}, & \{10,25,28,31,46,53\}, & \{11,26,29,32,41,47\}, & \{12,27,30,33,42,48\}, \\
    \{13,15,31,34,43,49\}, & \{14,16,32,35,44,50\}, & \{2,20,38,40,60,61\}, & \{3,21,28,39,61,62\}, \\
    \{4,22,29,40,62,63\}, & \{5,23,28,30,63,64\}, & \{6,24,29,31,64,65\}, & \{7,25,30,32,65,66\}, \\
    \{8,26,31,33,54,66\}, & \{9,27,32,34,54,55\}, & \{10,15,33,35,55,56\}, & \{11,16,34,36,56,57\}, \\
    \{12,17,35,37,57,58\}, & \{13,18,36,38,58,59\}, & \{14,19,37,39,59,60\}, & \{2,21,43,52,58,66\}, \\
    \{3,22,44,53,54,59\}, & \{4,23,41,45,55,60\}, & \{5,24,42,46,56,61\}, & \{6,25,43,47,57,62\}, \\
    \{7,26,44,48,58,63\}, & \{8,27,45,49,59,64\}, & \{9,15,46,50,60,65\}, & \{10,16,47,51,61,66\}, \\
    \{11,17,48,52,54,62\}, & \{12,18,49,53,55,63\}, & \{13,19,41,50,56,64\}, & \{14,20,42,51,57,65\}, \\
    \{15,27,29,38,52,57\}, & \{15,16,30,39,53,58\}, & \{16,17,31,40,41,59\}, & \{17,18,28,32,42,60\}, \\
    \{18,19,29,33,43,61\}, & \{19,20,30,34,44,62\}, & \{20,21,31,35,45,63\}, & \{21,22,32,36,46,64\}, \\
    \{22,23,33,37,47,65\}, & \{23,24,34,38,48,66\}, & \{24,25,35,39,49,54\}, & \{25,26,36,40,50,55\}, \\
    \{26,27,28,37,51,56\}, & \{15,25,37,44,45,61\}, & \{16,26,38,45,46,62\}, & \{17,27,39,46,47,63\}, \\
    \{15,18,40,47,48,64\}, & \{16,19,28,48,49,65\}, & \{17,20,29,49,50,66\}, & \{18,21,30,50,51,54\}, \\
    \{19,22,31,51,52,55\}, & \{20,23,32,52,53,56\}, & \{21,24,33,41,53,57\}, & \{22,25,34,41,42,58\}, \\
    \{23,26,35,42,43,59\}, & \{24,27,36,43,44,60\}, & \{15,24,32,51,59,62\}, & \{16,25,33,52,60,63\}, \\
    \{17,26,34,53,61,64\}, & \{18,27,35,41,62,65\}, & \{15,19,36,42,63,66\}, & \{16,20,37,43,54,64\}, \\
    \{17,21,38,44,55,65\}, & \{18,22,39,45,56,66\}, & \{19,23,40,46,54,57\}, & \{20,24,28,47,55,58\}, \\
    \{21,25,29,48,56,59\}, & \{22,26,30,49,57,60\}, & \{23,27,31,50,58,61\}. &  
\end{longtable}
The block graph of this design has 66 canonical and 14 non-canonical maximum cliques. 
The automorphism group of this design has order 39 and coincides with the automorphism group of the corresponding block graph.
The points split into four orbits of length 39, 13, 13 and 1, and the blocks split into five orbits of length 39, 39, 39, 13 and 13.
The canonical cliques split into four orbits of length 39,13,13 and 1, and the non-canonical cliques split into two orbits of length 13 and 1. A representative non-canonical clique of the orbit of length 13 is the following 13 blocks:
$$\{ 1, 2, 15, 28, 41, 54 \},$$
$$\{ 9, 27, 32, 34, 54, 55 \},$$
$$\{ 1, 3, 16, 29, 42, 55 \},$$
$$\{ 20, 24, 28, 47, 55, 58 \},$$
$$\{ 1, 6, 19, 32, 45, 58 \},$$
$$\{ 12, 14, 28, 29, 34, 45 \},$$
$$\{ 17, 18, 28, 32, 42, 60 \},$$
$$\{ 1, 8, 21, 34, 47, 60 \},$$
$$\{ 11, 26, 29, 32, 41, 47 \},$$
$$\{ 4, 23, 41, 45, 55, 60 \},$$
$$\{ 22, 25, 34, 41, 42, 58 \},$$
$$\{ 7, 13, 42, 45, 47, 54 \},$$
$$\{ 5, 10, 29, 54, 58, 60 \}.$$
Note that the set of intersecting points for these blocks is
$$
\{ 1, 28, 29, 32, 34, 41, 42, 45, 47, 54, 55, 58, 60 \}
$$
and has size 13. As in the proof of Proposition \ref{prop:C1}, the restriction of the 13 blocks above to the 13 intersecting points is again a $2$-(13,4,1) design, that is, a projective plane of order 3. So, we again have a 2-design,
and for each block a unique 2-point extension.

\bigskip
The 143 blocks of the second additional design are:
\begin{longtable}{cccc}
\{1,2,15,28,41,54\}, & \{1,3,16,29,42,55\}, & \{1,4,17,30,43,56\}, & \{1,5,18,31,44,57\}, \\
\{1,6,19,32,45,58\}, & \{1,7,20,33,46,59\}, & \{1,8,21,34,47,60\}, & \{1,9,22,35,48,61\}, \\
\{1,10,23,36,49,62\}, & \{1,11,24,37,50,63\}, & \{1,12,25,38,51,64\}, & \{1,13,26,39,52,65\}, \\
\{1,14,27,40,53,66\}, & \{2,11,14,18,23,25\}, & \{2,3,12,19,24,26\}, & \{3,4,13,20,25,27\}, \\
\{4,5,14,15,21,26\}, & \{2,5,6,16,22,27\}, & \{3,6,7,15,17,23\}, & \{4,7,8,16,18,24\}, \\
\{5,8,9,17,19,25\}, & \{6,9,10,18,20,26\}, & \{7,10,11,19,21,27\}, & \{8,11,12,15,20,22\}, \\
\{9,12,13,16,21,23\}, & \{10,13,14,17,22,24\}, & \{2,13,32,33,40,42\}, & \{3,14,28,33,34,43\}, \\
\{2,4,29,34,35,44\}, & \{3,5,30,35,36,45\}, & \{4,6,31,36,37,46\}, & \{5,7,32,37,38,47\}, \\
\{6,8,33,38,39,48\}, & \{7,9,34,39,40,49\}, & \{8,10,28,35,40,50\}, & \{9,11,28,29,36,51\}, \\
\{10,12,29,30,37,52\}, & \{11,13,30,31,38,53\}, & \{12,14,31,32,39,41\}, & \{2,9,31,56,64,66\}, \\
\{3,10,32,54,57,65\}, & \{4,11,33,55,58,66\}, & \{5,12,34,54,56,59\}, & \{6,13,35,55,57,60\}, \\
\{7,14,36,56,58,61\}, & \{2,8,37,57,59,62\}, & \{3,9,38,58,60,63\}, & \{4,10,39,59,61,64\}, \\
\{5,11,40,60,62,65\}, & \{6,12,28,61,63,66\}, & \{7,13,29,54,62,64\}, & \{8,14,30,55,63,65\}, \\
\{2,10,45,47,51,63\}, & \{3,11,46,48,52,64\}, & \{4,12,47,49,53,65\}, & \{5,13,41,48,50,66\}, \\
\{6,14,42,49,51,54\}, & \{2,7,43,50,52,55\}, & \{3,8,44,51,53,56\}, & \{4,9,41,45,52,57\}, \\
\{5,10,42,46,53,58\}, & \{6,11,41,43,47,59\}, & \{7,12,42,44,48,60\}, & \{8,13,43,45,49,61\}, \\
\{9,14,44,46,50,62\}, & \{2,17,36,39,53,60\}, & \{3,18,37,40,41,61\}, & \{4,19,28,38,42,62\}, \\
\{5,20,29,39,43,63\}, & \{6,21,30,40,44,64\}, & \{7,22,28,31,45,65\}, & \{8,23,29,32,46,66\}, \\
\{9,24,30,33,47,54\}, & \{10,25,31,34,48,55\}, & \{11,26,32,35,49,56\}, & \{12,27,33,36,50,57\}, \\
\{13,15,34,37,51,58\}, & \{14,16,35,38,52,59\}, & \{2,20,30,48,49,58\}, & \{3,21,31,49,50,59\}, \\
\{4,22,32,50,51,60\}, & \{5,23,33,51,52,61\}, & \{6,24,34,52,53,62\}, & \{7,25,35,41,53,63\}, \\
\{8,26,36,41,42,64\}, & \{9,27,37,42,43,65\}, & \{10,15,38,43,44,66\}, & \{11,16,39,44,45,54\}, \\
\{12,17,40,45,46,55\}, & \{13,18,28,46,47,56\}, & \{14,19,29,47,48,57\}, & \{2,21,38,46,61,65\}, \\
\{3,22,39,47,62,66\}, & \{4,23,40,48,54,63\}, & \{5,24,28,49,55,64\}, & \{6,25,29,50,56,65\}, \\
\{7,26,30,51,57,66\}, & \{8,27,31,52,54,58\}, & \{9,15,32,53,55,59\}, & \{10,16,33,41,56,60\}, \\
\{11,17,34,42,57,61\}, & \{12,18,35,43,58,62\}, & \{13,19,36,44,59,63\}, & \{14,20,37,45,60,64\}, \\
\{15,25,36,40,47,52\}, & \{16,26,28,37,48,53\}, & \{17,27,29,38,41,49\}, & \{15,18,30,39,42,50\}, \\
\{16,19,31,40,43,51\}, & \{17,20,28,32,44,52\}, & \{18,21,29,33,45,53\}, & \{19,22,30,34,41,46\}, \\
\{20,23,31,35,42,47\}, & \{21,24,32,36,43,48\}, & \{22,25,33,37,44,49\}, & \{23,26,34,38,45,50\}, \\
\{24,27,35,39,46,51\}, & \{15,24,29,31,60,61\}, & \{16,25,30,32,61,62\}, & \{17,26,31,33,62,63\}, \\
\{18,27,32,34,63,64\}, & \{15,19,33,35,64,65\}, & \{16,20,34,36,65,66\}, & \{17,21,35,37,54,66\}, \\
\{18,22,36,38,54,55\}, & \{19,23,37,39,55,56\}, & \{20,24,38,40,56,57\}, & \{21,25,28,39,57,58\}, \\
\{22,26,29,40,58,59\}, & \{23,27,28,30,59,60\}, & \{15,27,45,48,56,62\}, & \{15,16,46,49,57,63\}, \\
\{16,17,47,50,58,64\}, & \{17,18,48,51,59,65\}, & \{18,19,49,52,60,66\}, & \{19,20,50,53,54,61\}, \\
\{20,21,41,51,55,62\}, & \{21,22,42,52,56,63\}, & \{22,23,43,53,57,64\}, & \{23,24,41,44,58,65\}, \\
\{24,25,42,45,59,66\}, & \{25,26,43,46,54,60\}, & \{26,27,44,47,55,61\}. &  \\
\end{longtable}
The block graph of this design has 66 canonical and 14 non-canonical maximum cliques. 
The automorphism group of this design has order 39 and coincides with the automorphism group of the corresponding block graph.
The points split into four orbits of length 39, 13, 13 and 1, and the blocks split into five orbits of length 39, 39, 39, 13 and 13.
The canonical cliques split into four orbits of length 39,13,13 and 1, and the non-canonical cliques split into two orbits of length 13 and 1. A representative non-canonical clique of the orbit of length 13 is the following 13 blocks:
$$\{ 4, 19, 28, 38, 42, 62 \},$$
$$\{ 8, 26, 36, 41, 42, 64 \},$$
$$\{ 7, 13, 29, 54, 62, 64 \},$$
$$\{ 1, 2, 15, 28, 41, 54 \},$$
$$\{ 20, 21, 41, 51, 55, 62 \},$$
$$\{ 1, 3, 16, 29, 42, 55 \},$$
$$\{ 9, 11, 28, 29, 36, 51 \},$$
$$\{ 17, 27, 29, 38, 41, 49 \},$$
$$\{ 6, 14, 42, 49, 51, 54 \},$$
$$\{ 18, 22, 36, 38, 54, 55 \},$$
$$\{ 1, 10, 23, 36, 49, 62 \},$$
$$\{ 5, 24, 28, 49, 55, 64 \},$$
$$\{ 1, 12, 25, 38, 51, 64 \}.$$
Note that the set of intersecting points for these blocks is
$$
\{ 1, 28, 29, 36, 38, 41, 42, 49, 51, 54, 55, 62, 64 \}
$$
and has size 13. As in the proof of Proposition \ref{prop:C1}, the restriction of the 13 blocks above to the 13 intersecting points is again a $2$-(13,4,1) design, that is, a projective plane of order 3. So, we again have a 2-design,
and for each block a unique 2-point extension.

Thus, the two designs above have the same lengths of orbits as the design described in the main part of this paper. We thus ask a question: is there a symmetric description of the two designs above, similar to the description of the design considered in the main part of this paper?    
\end{document}